\documentclass[11pt]{amsart}
\usepackage{amssymb}
\usepackage[T1]{fontenc}
\usepackage{hyperref}

\usepackage{anysize}
\marginsize{2.5cm}{2.5cm}{2.5cm}{2.5cm}
\linespread{1.3}

\theoremstyle{plain}
\newtheorem{theorem}{Theorem}
\newtheorem{proposition}{Proposition}
\newtheorem{lemma}{Lemma}

\theoremstyle{remark}
\newtheorem{remark}{Remark}

\newcommand{\Q}{\mathbb{Q}}

\title{There are infinitely many rational Diophantine sextuples with square denominators}

\author{Andrej Dujella}
\address{Department of Mathematics, University of Zagreb, Bijeni\v{c}ka cesta 30, 10000 Zagreb, Croatia}
\email{duje@math.hr}
\author{Matija Kazalicki}
\address{Department of Mathematics, University of Zagreb, Bijeni\v{c}ka cesta 30, 10000 Zagreb, Croatia}
\email{matija.kazalicki@math.hr}
\author{Vinko Petri\v cevi\'c}
\address{Department of Mathematics, University of Zagreb, Bijeni\v{c}ka cesta 30, 10000 Zagreb, Croatia}
\email{vpetrice@math.hr}

\keywords{Diophantine sextuples, elliptic curve}

\subjclass[2010]{11D09, 11G05, 11Y50}

\thanks{The authors were supported by the QuantiXLie Centre of Excellence, a project co-financed by the Croatian Government and European Union through the European Regional Development Fund - the Competitiveness and Cohesion Operational Programme (Grant KK.01.1.1.01.0004), and by the Croatian Science Foundation under the project no. IP-2018-01-1313.}

\begin{document}

\begin{abstract}
A rational Diophantine $m$-tuple is a set of $m$ nonzero rationals such that the product of any two of them increased by $1$ is a perfect square. The first rational Diophantine quadruple was found by Diophantus, while Euler proved that there are infinitely many rational Diophantine quintuples. In 1999, Gibbs found the first example of a rational Diophantine sextuple, and in 2016 Dujella, Kazalicki, Miki\'c and Szikszai proved that there are infinitely many of them. In this paper, we prove that there exist infinitely many rational Diophantine sextuples such that the denominators of all the elements in the sextuples are perfect squares.
\end{abstract}

\maketitle

\section{Introduction}

\noindent A set of $m$ nonzero rationals $\{a_1,a_2,\ldots,a_m\}$ is called {\em a rational Diophantine $m$-tuple} if $a_ia_j+1$ is a perfect square for all $1\leq i<j\leq m$.  The first example of a rational Diophantine quadruple was the set
$$
\left\{\frac{1}{16},\, \frac{33}{16},\, \frac{17}{4},\, \frac{105}{16}\right\}
$$
found by Diophantus (see \cite{Dio}). Euler found infinitely many rational Diophantine quintuples (see \cite{Hea}),  e.g. he was able to extend the integer Diophantine quadruple
$$
\{1,3,8,120\}
$$
found by Fermat, to the rational quintuple
$$
\left\{ 1, 3, 8, 120, \frac{777480}{8288641} \right\}.
$$
Stoll \cite{S} recently showed that this extension is unique.

\noindent In 1999, Gibbs found the first example of rational Diophantine sextuple \cite{Gibbs1}
$$
\left\{ \frac{11}{192}, \frac{35}{192}, \frac{155}{27}, \frac{512}{27}, \frac{1235}{48}, \frac{180873}{16} \right\},
$$
and in 2016 Dujella, Kazalicki, Miki\'c and Szikszai \cite{D-K-M-S} showed that there are infinitely many rational Diophantine triples that can be extended to the Diophantine sextuple in infinitely many ways.
For example, there are infinitely many rational Diophantine sextuples
containing the triples
$\{15/14, -16/21, 7/6\}$ and $\{ 3780/73, 26645/252, 7/13140\}$.
Soon after that, Dujella and Kazalicki \cite{D-K} (inspired by the work of Piezas \cite{P}) described another construction of rational Diophantine sextuples extending an infinite class of Diophantine quadruples to Diophantine sextuples in one way. For the description of this family see Section \ref{sec:1}.

No example of a rational Diophantine septuple is known. On the other hand, as a consequence of the Lang conjecture on varieties of general type we expect the number of elements of a rational Diophantine tuple to be bounded (see Introduction of \cite{D-K-M-S}). For more information on Diophantine $m$-tuples see the survey article \cite{D}.

In this paper we study arithmetic properties of rational Diophantine sextuples, in particular we prove the following theorem.

\begin{theorem}
There are infinitely many rational Diophantine sextuples such that denominators of all the elements (in the lowest terms) in the sextuples are perfect squares.
\end{theorem}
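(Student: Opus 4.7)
The plan is to start from the parametric family of rational Diophantine sextuples constructed by Dujella and Kazalicki in \cite{D-K}, which will be recalled in Section \ref{sec:1}. In that construction, the six elements of the sextuple are given as explicit rational functions of one (or a few) parameter(s), so the square-denominator requirement translates into an explicit system of polynomial conditions on those parameter(s). My first step would therefore be to write out the six elements in lowest terms and, for each of them, factor the denominator into a constant part (common to the whole family) and a parameter-dependent part; the condition that the denominator be a perfect square then amounts to requiring that certain explicit polynomials in the parameters be squares, possibly up to a fixed known squarefree factor.

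Next, I would intersect these conditions. Typically each single ``square denominator'' condition defines a curve of genus $0$ or $1$; intersecting several of them cuts out either a curve or a surface in the parameter space. The hope is that after suitable manipulation --- e.g., choosing one of the parameters so that one or two of the conditions are automatically satisfied (by pulling back along a rational parametrisation), and expressing the remaining conditions in a convenient Weierstrass form --- the locus of valid parameters becomes (or contains) an elliptic curve $E/\Q$. Then the theorem reduces to exhibiting a rational point of infinite order on $E$, together with a verification that distinct points give rise to distinct sextuples (which is routine, since the elements of the sextuple are non-constant rational functions of the parameters).

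The main obstacle is the last step: proving that the resulting curve has positive Mordell--Weil rank. I would attack it as follows. First, compute the curve $E$ explicitly from the chosen parametrisation. Second, search numerically for small rational points on $E$; any non-torsion point found can be promoted to infinitely many, by repeated addition, and each multiple $nP$ yields a distinct sextuple (for $n$ large enough) with all square denominators. To certify infinite order one can either check that the point has infinite image under reduction modulo two primes of good reduction whose orders of the reduction are coprime, or invoke Mazur's theorem together with a direct verification that the point is not torsion. Finally, I would record one explicit numerical example of a sextuple with all square denominators to make the construction concrete and to double-check the computations.

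If the naive family from \cite{D-K} does not readily produce an elliptic curve of positive rank, a fallback is to first restrict to the subfamily inside \cite{D-K} where one specific denominator is forced to be a square --- this typically gives a rational curve, reducing the number of free parameters by one --- and then repeat the analysis on the next denominator condition. Iterating this procedure, one expects to land on an elliptic curve after controlling three or four of the six conditions by direct parametrisation, leaving the remaining two to define $E$.
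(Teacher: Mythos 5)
Your top-level strategy --- specialize the Dujella--Kazalicki family so that the square-denominator requirements cut out a positive-rank elliptic curve, and then run through the multiples of a non-torsion point --- is in fact the strategy of the paper. The authors interpolate a numerically found example to get a specific subfamily $\mathcal{F}(t)$ in which, after rewriting, every denominator is a power of $t^4-1$ times a square; hence the \emph{single} genus-one condition $t^4-1=-6s^2$ (equivalently $y^2=x^3+9x$, of rank one with generator $T=[4,10]$) governs all six elements at once, and infinitude comes from the points $[k]T$. Your plan of imposing the six conditions one at a time and intersecting would, if carried out, have to discover this same coincidence; that part is plausible, if optimistic.

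The genuine gap is elsewhere: you treat ``the denominator in lowest terms is a perfect square'' as a geometric condition on the polynomial denominators of the family, and it is not. For a rational specialization $t=t_k$ the condition is that $v_p(a_i(t_k))$ be even at every prime $p$ where it is negative, and this depends on cancellation between numerator and denominator at individual primes --- something a rational parametrization of the polynomial denominator cannot certify. Essentially the whole of the paper's proof of Proposition~\ref{prop:1} is devoted to this: resultant computations showing that away from $2$ and $3$ no prime can divide both $s_k$ (or $t_k^4+34t_k^2+1$) and the relevant numerator factors, a separate $2$-adic check, and a $3$-adic congruence (Lemma~\ref{lemma:1}, on $t_k \bmod 9$) which forces the restriction $k\equiv 1,2\pmod 3$. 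That congruence is not cosmetic: the denominator of $a_2(t_k)$ is $3\cdot 24^2 s_k^6$, whose $3$-adic valuation is odd, so the numerator must supply at least one compensating factor of $3$, which happens precisely when $v_3(t_k-5)\ge 2$. Your argument as written would never surface this obstruction and would accept parameters for which the reduced denominators fail to be squares at $p=3$; to close the gap you would need to add a prime-by-prime valuation analysis of exactly the kind the paper carries out.
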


We can describe one such family in the following way. Let $C:t^4-1=-6s^2$ be a genus one curve defined over $\Q$. It is birationally equivalent to the elliptic curve $E':y^2=x^3+9x$. Denote by $T=[4,10]$ the point of infinite order in Mordell-Weil group $E'(\Q)$. For a positive integer $k$ denote by $t_k$ the $t$-coordinate of the point on $C$ that corresponds to the point $[k]T$ on $E'(\Q)$ under birational equivalence.
Let $\mathcal{F}(t)$ be a family of Diophantine sextuples (for the construction of this family see Section \ref{sec:1})

\begin{eqnarray*}
&\{-\frac{9 \left(t^2+1\right)}{8 (t-1) (t+1)},\frac{\left(t^2-7\right) \left(t^2+1\right) \left(7
   t^2-1\right)}{8 (t-1)^3 (t+1)^3},\frac{8 (t-1)^3 (t+1)^3}{9 \left(t^2+1\right)^3},-\frac{2
   \left(t^2+5\right) \left(5 t^2+1\right)}{9 (t-1) (t+1) \left(t^2+1\right)},\\ &-\frac{2 t \left(t^2-4
   t-3\right) \left(3 t^2-4 t-1\right) \left(t^3+8 t^2+5 t+4\right) \left(4 t^3-5 t^2+8 t-1\right)}{(t-1)
   (t+1) \left(t^2+1\right) \left(t^4+34 t^2+1\right)^2},\\ &\frac{2 t \left(t^2+4 t-3\right) \left(3 t^2+4
   t-1\right) \left(t^3-8 t^2+5 t-4\right) \left(4 t^3+5 t^2+8 t+1\right)}{(t-1) (t+1) \left(t^2+1\right)
   \left(t^4+34 t^2+1\right)^2} \}\enspace.
\end{eqnarray*}

\begin{proposition}\label{prop:1}
	If $k$ is a positive integer such that $k\equiv 1,2 \pmod{3}$, then $\mathcal{F}(t_k)$ is a rational Diophantine sextuple such that denominators of all the elements in the sextuple are perfect squares.
\end{proposition}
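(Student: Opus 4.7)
The plan is to exploit the defining relation $(t^2-1)(t^2+1) = -6s^2$ of $C$ to rewrite each element of $\mathcal{F}(t_k)$ as a small squarefree rational (dividing $6$) times the square of a rational number, and then to control the resulting $3$-adic valuations via a local analysis of $E'$ at $p = 3$.

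From the construction in Section~\ref{sec:1}, $\mathcal{F}(t)$ is a Diophantine sextuple in $\Q(t)$, so $\mathcal{F}(t_k)$ is a Diophantine sextuple for all but finitely many $k$; only the square-denominator claim remains. Substituting the relation of $C$ eliminates the factor $(t^2-1)$ from every denominator, yielding
\[
a_1 = 3\left(\frac{t^2+1}{4s}\right)^{\!2}, \qquad a_3 = -3\left(\frac{8 s^3}{(t^2+1)^3}\right)^{\!2}.
\]
For the other four elements, I would combine the curve relation with algebraic identities among the $a_i$'s. From the identity $a_1 a_4 + 1 = \bigl(3(t^2+1)/(2(t^2-1))\bigr)^{2}$ (which on $C$ equals $16\,r_1^4$, where $r_1 := (t^2+1)/(4s)$), one gets $a_4 = (16 r_1^4 - 1)/(3 r_1^2)$. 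Passing to the elliptic-curve coordinates via $t_k = (x_k-3)/(x_k+3)$ and $s_k = 2y_k/(x_k+3)^{2}$, and using $y_k^2 = x_k(x_k^2+9)$ together with the factorization $x_k^4 - 46x_k^2 + 81 = x_k^2\bigl((y_k/x_k)^4-64\bigr)$, one checks $a_2 = Y^2(Y^4-64)/192$ with $Y = y_k/x_k$. Finally, $a_5$ and $a_6$ are interchanged by the involution $t \mapsto -t$ (which on $E'$ corresponds to $P \mapsto -P-(0,0)$) and admit analogous rewritings.

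Once each $a_i(t_k)$ is in such a form, the $2$-adic parts of its denominator are automatically even: the powers of $2$ in the constants $4, 8, 192, 1728$ combine with the even valuations of the squared factors $s^{2b}, (t^2+1)^{2a}$ and with the sixth-power denominator $q_k^6$ of $x_k = p_k/q_k^2$ to give even totals. The only potential obstruction is at $p=3$, and it is resolved by the local structure of $E'$ at $3$: $E'$ reduces to the cuspidal cubic $y^2 = x^3$, whose smooth locus is $\mathbb{G}_a(\mathbb{F}_3) \cong \Z/3$ via $u \mapsto (u^2,u^3)$. Since $T = (4,10)$ reduces to $u = 1$, the reduction of $[k]T$ is $u \equiv k \pmod 3$; this is nonzero exactly when $k \not\equiv 0 \pmod 3$, in which case $v_3(x_k) = v_3(y_k) = 0$, hence $v_3(t_k) = v_3(s_k) = v_3(t_k^2+1) = 0$. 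In each identity above, the numerator then acquires the required factor of $3$ (for instance $16 r_1^4 - 1 \equiv 0 \pmod 3$ and $Y^4 - 64 \equiv 0 \pmod 3$ whenever $3$ does not divide the corresponding denominators), cancelling the squarefree $3$ in the denominator; thus the denominator of $a_i(t_k)$ in lowest terms is coprime to $3$ and is a perfect square. The main obstacle will be producing the explicit square-plus-squarefree identities for $a_5$ and $a_6$, whose numerators are polynomials of higher degree; once those are in place, the $3$-adic analysis is the decisive step.
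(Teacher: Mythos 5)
Your overall strategy is the same as the paper's: use the relation $t^4-1=-6s^2$ to turn each denominator into a square times a small squarefree constant, and control the critical prime $p=3$ through the reduction of $E'$ modulo $3$ (your cuspidal-cubic argument is exactly the paper's observation that $[3]T$ lies in the kernel of reduction, and your identities $a_1=3r_1^2$, $a_3=-3(8s^3/(t^2+1)^3)^2$, $a_1a_4+1=16r_1^4$ and $a_2=Y^2(Y^4-64)/192$ with $Y=y_k/x_k$ all check out; the last two are in fact cleaner than the paper's resultant-based treatment of $a_2$ and $a_4$, since for those elements a single factor of $3$ in the numerator, supplied by $r_1^4\equiv Y^4\equiv 1\pmod 3$, does cancel the lone $3$ in the denominator).

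The genuine gap is $a_5$ and $a_6$, which you explicitly defer, and for which your proposed $3$-adic mechanism would fail even if you produced analogous identities. The denominator of $a_5(t_k)$ is $3s_k^2(t_k^4+34t_k^2+1)^2$, and since $t_k\equiv 5\pmod 9$ one computes $v_3(t_k^4+34t_k^2+1)=2$, so the denominator has $3$-adic valuation $5$: the numerator must supply $3^5$, not the single factor of $3$ that Fermat's little theorem gives. The paper obtains this by verifying, factor by factor, that $v_3(3t_k^2-4t_k-1)\ge 2$, $v_3(t_k^3+8t_k^2+5t_k+4)=1$ and $v_3(4t_k^3-5t_k^2+8t_k-1)=2$, all of which rest on the congruence $t_k\equiv 5\pmod 9$ --- a strictly finer statement than the $v_3(x_k)=v_3(y_k)=0$ your mod-$3$ reduction argument yields (the paper derives it from $x_k$ being a square modulo $9$). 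In addition, for primes $p\nmid 6$ dividing $t_k^4+34t_k^2+1$ you must rule out an odd partial cancellation between the square $(t_k^4+34t_k^2+1)^2$ in the denominator and the quintic product in the numerator; the paper does this with a resultant computation ($2^{44}3^{16}$), and nothing in your sketch addresses this factor at all. Note also that the involution $t\mapsto -t$ does not simply transport the $a_5$ analysis to $a_6$: it sends the congruence class $t_k\equiv 5\pmod 9$ to $-t_k\equiv 4\pmod 9$, so the mod-$9$ valuations of the individual factors must be recomputed (they distribute differently between the two elements, even though the totals again work out).
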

\begin{remark}
Our construction always produced the sextuples with the mixed signs since the product of the first and the third element is a negative number.
\end{remark}

\section{Interpolating numerical data}\label{sec:1}

Our starting point is an example of Diophantine sextuple with square denominators
$$l=\left\{\frac{75}{8^2},-\frac{3325}{64^2},-\frac{12288}{125^2},\frac{123}{10^2},
\frac{3498523}{2260^2},\frac{698523}{2260^2}\right\},$$
which we have discovered  (together with seventeen other examples) by a numerical search which we now briefly describe.

In the first step of this experiment, we generated all Diophantine quintuples with numerators and denominators in the range between $-2^{15}$ and $2^{15}$.
Next, we tried to extend each quintuple to a Diophantine sextuple by requiring that the sixth element forms a regular $m$-tuple (where $m=3,4,5$ or $6$) with some elements from that quintuple. E.g. if $\{a,b,c,d,e\}$ is one such Diophantine quintuple, we can define $f$ such that $(a+b-f-c)^2 = 4(ab + 1)(fc + 1)$ holds (i.e. such that $\{a,b,f,c\}$ is a regular quadruple), and then check if $\{a,b,c,d,e,f\}$ is a Diophantine sextuple. Justification for this heuristics comes from the observation \cite{G3} that a ``random'' Diophantine sextuples often contains a regular Diohantine $m$-tuple. For more information about regular $m$-tuples see \cite{G2}.

In order to describe a family of Diophantine sextuples containing $l$, we recall the construction from \cite{D-K}.

Let $\{a, b, c, d\}$ be a rational Diophantine quadruple such that 
\begin{equation}\label{eq:1}
(abcd-3)^2=4(ab+cd+3),
\end{equation}
and let $x_1$ and $x_2$ be the roots of $$(abcdx+2abc+a+b+c-d-x)^2=4(ab+1)(ac+1)(bc+1)(dx+1).$$
If $x_1x_2 \ne 0$ then Proposition 1 in \cite{D-K} (see also \cite{P}) states that $\{a,b,c,d,x_1,x_2\}$ is a Diophantine sextuple.

Since the quadruple $(a,b,c,d)$ satisfies
\begin{align*}
ab+1=t_{12}^2 \quad ac+1&=t_{13}^2 \quad ad+1=t_{14}^2\\
bc+1=t_{23}^2 \quad bd+1&=t_{24}^2 \quad cd+1=t_{34}^2,
\end{align*}
where $t_{ij}$ are in $\Q$ or $\Q(t)$, it follows that $(t_{12},t_{34},t_{13},t_{24},t_{14},t_{23}, m'=abcd)$ defines a rational point on an algebraic variety $\mathcal{C}$ defined by the following equations:
\begin{align*}
(t_{12}^2-1)(t_{34}^2-1)&=m'\\
(t_{13}^2-1)(t_{24}^2-1)&=m'\\
(t_{14}^2-1)(t_{23}^2-1)&=m'.
\end{align*}

Conversely, the points $(\pm t_{12},\pm t_{34},\pm t_{13},\pm t_{24},\pm t_{14},\pm t_{23}, m')$ on $\mathcal{C}$ determine two rational Diophantine quadruples $\pm(a,b,c,d)$ (for example $a^2=(t_{12}^2-1)(t_{13}^2-1)/(t_{23}^2-1)$) provided that the elements $a,b,c$ and $d$ are rational, distinct and non-zero.
(Note that if one element is rational, then all the elements are rational.)

The projection $(t_{12},t_{34},t_{13},t_{24},t_{14},t_{23}, m')\mapsto m'$ defines a fibration of $\mathcal{C}$ over the projective line, and a generic fiber is the product of three genus one curves $\mathcal{D}:(x^2-1)(y^2-1)=m'$, hence any point on $\mathcal{C}$ corresponds to the three points $Q_1=(t_{12},t_{34})$, $Q_2=(t_{13},t_{24})$ and $Q_3=(t_{14}, t_{23})$ on $\mathcal{D}$. The condition \eqref{eq:1} is equivalent to $t_{12} t_{34}= \pm t_{12} \pm t_{34}$, or $t_{34}=\pm t_{12}/(t_{12}\pm 1)$. Hence, if we set $t_{12} =t$, $t_{34}=t/(t-1)$ and $m'=(t^2-1)(\frac{t^2}{(t-1)^2}-1)=\frac{2t^2 + t - 1}{t - 1}$, the  condition \eqref{eq:1} is automatically satisfied.

The curve $\mathcal{D}$ over $\Q(t)$
$$\mathcal{D}: (x^2-1)(y^2-1)=\frac{2t^2 + t - 1}{t - 1}$$ is birationally equivalent to the elliptic curve
$$E: S^2 = T^3 -2\cdot \frac{2t^2 - t + 1}{t-1}T^2 + \frac{(2t - 1)^2 (t + 1)^2}{(t-1)^2} T.$$
The map is given by $T = 2(x^2-1)y+2x^2-(2-m')$,
and $S = 2Tx$, where $m'=\frac{2t^2 + t - 1}{t - 1}$.

Denote by $P=\displaystyle\left[\frac{(2t-1)^2(t+1)}{t-1}, \frac{2t(2t-1)^2(t+1)}{t-1}\right]\in E(\Q(t))$ a point of infinite order on $E$, and by $R
 = \displaystyle\left[\frac{(t+1)(2t-1)}{t-1}, \frac{2(t+1)(2t-1)}{t-1}\right]$ a point of order $4$. The point $(t_{12},t_{34}) \in \mathcal{D}(\Q(t))$ corresponds to the point $P\in E(\Q(t))$. The points $P$ and $R$ generate Mordell-Weil group $E(\Q(t))$ (see \cite{D-K}).

Therefore, to specify the family of Diophantine quadruples which satisfies \eqref{eq:1} and whose product is $\frac{2t^2 + t - 1}{t - 1}$, we need to specify two points $Q_2$ and $Q_3$ in $E(\Q(t))$ (\cite{D-K} gives sufficient conditions for three points $P,Q_2$ and $Q_3$ in $E(\Q(t))$ to define the quadruple whose elements are rational, distinct and non-zero).   

If we go back to our example, we can observe that the first four elements of $l$ satisfy condition $\eqref{eq:1}$, and that their product is equal to $\frac{2t_0^2 + t_0 - 1}{t_0 - 1}$ where $t_0=113/625$. Inspired by the fact that $\frac{1-t_0}{2}=16/25$ is a square, we restrict to the subfamily $t:=1-2t^2$, i.e. we consider the base change of $E/\Q(t)$ to $\Q\left(\sqrt{\frac{1-t}{2}}\,\right)$. If we further specialize $t:=\frac{3t^2+3}{4t^2-4}$ and denote the resulting elliptic curve also by $E$, then we obtain another point of infinite order in $E(\Q(t))$ $$Q=\left[-\frac{\left(t^2+5\right)^2 \left(5 t^2+1\right)^2}{36 (t-1)^2 (t+1)^2
   \left(t^2+1\right)^2},\frac{t \left(t^2+5\right)^2 \left(5 t^2+1\right)^2}{9 (t-1)^2 (t+1)^2
   \left(t^2+1\right)^3}\right].$$
	
It is easy to check that the family $\mathcal{F}$ corresponds to the triple $(P,Q,P+R)\in E(\Q(t))^3$, and we obtain $l$ if we specialize to $t=-1/7$, i.e. $l=\mathcal{F}(-1/7)$.

\section{Analysis of the family $\mathcal{F}$}
For a prime $p$ and $q\in \Q$, we denote by $v_p(q)$ a $p$-adic valuation of $q$ normalized such that $v_p(p)=1$. Let $E':y^2=x^3+9x$ be an elliptic curve birationally equivalent to $C$ via the map $h:E'\rightarrow C$, $h(x,y)=\left(\frac{3-x}{3+x},\frac{-2y}{(x+3)^2}\right)$. Let $T=[4,10] \in E'(\Q)$ be a generator of the free part of the Mordell-Weil group $E'(\Q)$. For an integer $k$, denote by $(t_k,s_k):=h([k]T)\in C$. 
\begin{lemma}\label{lemma:1} Let $k$ be a positive integer.
If $k\equiv 0 \pmod{3}$ then $v_3(s_k)>0$ and $v_3(t_k - 2)\ge 1$. If $k\equiv 1,2 \pmod{3}$  then $v_3(t_k-5)\ge 2$ and $v_3(s_k)=0$.
\end{lemma}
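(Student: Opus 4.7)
The plan is to analyze the reduction of $E'$ modulo $3$ and translate back through the map $h$. Modulo $3$ the curve $E' \colon y^2 = x^3 + 9x$ becomes the cuspidal cubic $y^2 = x^3$, so $E'$ has additive reduction at $3$. The smooth locus of this reduction, together with the point at infinity, forms a group isomorphic to $(\mathbb{F}_3,+)$ via $(x,y)\mapsto x/y$ (with $O \mapsto 0$); concretely $E'_{\mathrm{ns}}(\mathbb{F}_3) = \{O,(1,1),(1,2)\}$ corresponds to $\{0,1,2\}$. Since $T=(4,10)$ reduces to $(1,1)$, the point $T$ lies in $E'_0(\Q_3)$ and maps to the generator $1 \in \mathbb{F}_3$. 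As reduction induces a group homomorphism $E'_0(\Q_3) \to E'_{\mathrm{ns}}(\mathbb{F}_3)$, the multiple $[k]T$ reduces to $k \bmod 3$.

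Writing $(x_k,y_k)$ for the affine coordinates of $[k]T$, this immediately gives two cases. If $k \equiv 1,2 \pmod{3}$, then $v_3(x_k)=v_3(y_k)=0$ and $x_k \equiv 1 \pmod 3$. If $k \equiv 0 \pmod{3}$, then $[k]T$ reduces to $O$, so $v_3(x_k)<0$, and the relation $y_k^2 = x_k^3+9x_k$ forces $v_3(x_k)=-2e$ and $v_3(y_k)=-3e$ for some integer $e \ge 1$.

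It then remains to push this information through $h$. From the definition of $h$ one has the identities
\[
t_k - 5 \;=\; -\frac{6(x_k+2)}{x_k+3}, \qquad t_k - 2 \;=\; -\frac{3(x_k+1)}{x_k+3}, \qquad s_k \;=\; -\frac{2y_k}{(x_k+3)^2},
\]
which make the required valuations visible. For $k \equiv 1,2 \pmod{3}$, $v_3(x_k+3)=0$ and $v_3(x_k+2) \ge 1$ (since $x_k \equiv -2 \pmod 3$), giving $v_3(t_k-5) \ge 2$ and $v_3(s_k)=0$. For $k \equiv 0 \pmod{3}$, both $v_3(x_k+1)$ and $v_3(x_k+3)$ equal $v_3(x_k)=-2e$, so $v_3(t_k-2)=1$, and $v_3(s_k) = -3e + 4e = e > 0$.

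The only non-routine ingredient is the structural fact that $T \in E'_0(\Q_3)$ and that reduction induces a group homomorphism onto $E'_{\mathrm{ns}}(\mathbb{F}_3) \cong \Z/3\Z$; this is the standard theory of N\'eron models for curves with additive reduction, and is the main place where care is needed. Once that structural step is in place, the remaining $3$-adic valuation estimates are immediate.
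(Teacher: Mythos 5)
Your proof is correct and follows essentially the same route as the paper: both arguments use the mod-$3$ reduction homomorphism on $E'$ (with $T$ reducing into the order-$3$ group of nonsingular points of the cuspidal reduction $y^2=x^3$) to separate the cases $3\mid k$ and $3\nmid k$, and then transfer the resulting $3$-adic information on $(x_k,y_k)$ through the map $h$. Your explicit identities $t_k-5=-6(x_k+2)/(x_k+3)$, $t_k-2=-3(x_k+1)/(x_k+3)$ and $s_k=-2y_k/(x_k+3)^2$ give a slightly cleaner finish than the paper's case check of the squares $1,4,7$ modulo $9$, but the substance of the argument is the same.
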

\begin{proof}
Define $(x_k,y_k):=[k]T$. Since $y_k^2=x_k^3+9x_k$, it follows that either $v_3(x_k)<0$ or $x_k$ is a square mod $9$.
The first case occurs when $k\equiv 0 \pmod{3}$ (since $[3]T$ is in the kernel of mod $3$ reduction map on $E'$), and then $t_k=\frac{3-x_k}{3+x_k}$ implies that $v_3(t_k+1)\ge 2$.
In the second case one finds that $v_3(t_k-5)\ge 2$ (since the squares mod $9$ are $1,4$ and $7$).
In the first case, since $t_k^4-1=-6 s_k^2$ it follows $v_3(s_k)>0$, while in the second case $v_3(s_k)=0$.
\end{proof}

\begin{proof}[Proof of Proposition \ref{prop:1}]

	We analyze denominators of all the elements in $\mathcal{F}=\{a_1(t),a_2(t),\ldots,a_6(t)\}$ separately.
	\begin{enumerate}
		\item [i)] Since $a_1(t)=-\frac{9 \left(t^2+1\right)}{8 (t-1) (t+1)}=-\frac{9 \left(t^2+1\right)^2}{8 \left(t^4-1\right)}$, we have that $a_1(t_k)=\frac{3 \left(t_k^2+1\right)^2}{16 s_k^2}$.  Similarly, $a_3(t)=\frac{8 (t-1)^3 (t+1)^3}{9 \left(t^2+1\right)^3}=\frac{8 \left(t^4-1\right)^3}{9 \left(t^2+1\right)^6}$, and $a_3(t_k)=\frac{-3\cdot 2^6 s_k^6}{(t_k^2+1)^6}$.
		The claim follows from Lemma \ref{lemma:1} since it implies that $v_3(s_k)=0$ if $k\equiv 1,2 \pmod{3}$.
		\item[ii)] We have that $a_2(t)=\frac{\left(t^2-7\right) \left(t^2+1\right) \left(7 t^2-1\right)}{8 (t-1)^3 (t+1)^3}=\frac{\left(t^2-7\right) \left(t^2+1\right)^4 \left(7 t^2-1\right)}{8 \left(t^4-1\right)^3}$.  The only primes that can divide denominator of $a_2(t_k)=\frac{-\left(t_k^2-7\right) \left(t_k^2+1\right)^4 \left(7 t_k^2-1\right)}{3\cdot 24^2 s_k^6}$ are the primes that divide $s_k$ or $1/t_k$. Assume that $p$ is a prime different than $2$ and $3$. Since $t_k^4-1=-6 s_k^2$, if $v_p(t_k)<0$, then $v_p(s_k)=2v_p(t_k)$. Hence for such $p$, $v_p(a_2(t_k))= 1$ if $p=7$ and $0$ otherwise. If $v_p(s_k)>0$, then $v_p(a_2(t_k))$ is even unless $v_p((t_k^2-7)(7 t_k^2-1))>0$. Since the resultant of polynomials $t^4-1$ and $(t^2-7)(7t^2-1)$ is equal to $2^{16} 3^4$ (and is divisible only by $2$ and $3$) this can not happen.
		To rule out $p=3$ case, we note that $v_3\left((t_k^2-7)(7 t_k^2-1)\right)\ge 3$ since Lemma \ref{lemma:1} implies that $v_3(t_k - 5)\ge 2$ if $k \equiv 1,2 \pmod{3}$. Hence $v_3(a_2(t_k)) \ge 0$. If $v_2(s_k)>0$, then $v_2(t_k)=0$ and $v_2((t_k^2-7)(7 t_k^2-1))=2$ is even number. Finally, if $v_2(t_k)<0$, then $4v_2(t_k)=2v_2(s_k)+1$ which is not possible.
\item[iii)]
Since $a_4(t)=-\frac{2 \left(t^2+5\right) \left(5 t^2+1\right)}{9 \left(t^4-1\right)}$ then $a_4(t_k)=\frac{ \left(t_k^2+5\right) \left(5 t_k^2+1\right)}{3\cdot 3^2 s_k^2}$. Let $p$ be a prime different than $2$ and $3$. Same as in the part ii), if $v_p(t_k)<0$ then $v_p(a_4(t_k))=1$ if $p=5$ and $0$ otherwise.
If $v_p(s_k)>0$, then $v_p(a_4(t_k))$ is even unless $v_p\left(\left(t_k^2+5\right) \left(5 t_k^2+1\right)\right)>0$. Since the resultant of polynomials $t^4-1$ and $\left(t^2+5\right) \left(5 t^2+1\right)$ is $2^{12}3^4$ this can not happen. Note that $v_3\left(\left(t_k^2+5\right) \left(5 t_k^2+1\right)\right)\ge 3$ (Lemma \ref{lemma:1}), hence if $k\equiv 1,2 \pmod{3}$ then $v_3(a_4(t_k) \ge 0$. As earlier, $v_2(t_k)<0$ is not possible, and if $v_2(s_k)>0$ then $v_2(t_k)=0$ and $v_2\left(\left(t_k^2+5\right) \left(5 t_k^2+1\right)\right)=2$.

\item[iv)] We have that $$a_5(t_k)=\frac{ t_k \left(t_k^2-4 t_k-3\right) \left(3 t_k^2-4 t_k-1\right) \left(t_k^3+8 t_k^2+5 t_k+4\right) \left(4 t_k^3-5t_k^2+8 t_k-1\right)}{3s_k^2 \left(t_k^4+34 t_k^2+1\right)^2}.$$ 
Let $p$ be a prime different than $2$ and $3$. If $v_p(t_k)<0$ then $v_p(a_5(t_k))=11 v_p(t_k)-8 v_p(t_k)-2 v_p(s_k)=-v_p(t_k)>0$. If $v_p(s_k)>0$, then $v_p(a_5(t_k))$ is even unless $$v_p\left( t_k \left(t_k^2-4 t_k-3\right) \left(3 t_k^2-4 t_k-1\right) \left(t_k^3+8 t_k^2+5 t_k+4\right) \left(4 t_k^3-5t_k^2+8 t_k-1\right)\right)>0.$$ As before, since the resultant is divisible only by $2$ and $3$ this can not happen. The resultant of polynomials $t \left(t^2-4 t-3\right) \left(3 t^2-4 t-1\right)\left(t^3+8 t^2+5 t+4\right) \left(4 t^3-5t^2+8 t-1\right)$ and $t^4+34t^2+1$ is $2^{44}3^{16}$, hence we have the same conclusion if $v_p(t_k^4+34t_k^2+1)>0$. As earlier, $v_2(t_k)<0$ is not possible. If $v_2(s_k)>0$, then $v_2(t_k)=0$, and we find that $$v_2\left( t_k \left(t_k^2-4 t_k-3\right) \left(3 t_k^2-4 t_k-1\right) \left(t_k^3+8 t_k^2+5 t_k+4\right) \left(4 t_k^3-5t_k^2+8 t_k-1\right)\right)=4.$$
Similarly, if $v_2(t_k^4+34t_k^2+1)>0$, then $v_2(t_k)=0$, and we have the same conclusion as above. 

If $k\equiv 1,2 \pmod{3}$, then since $v_3(t_k-5)>1$ by the direct calculation we can show that $v_3\left(3s_k^2 \left(t_k^4+34 t_k^2+1\right)^2\right)=5$. Similarly we can check that $v_2(t_k) = 0$, $v_3(t_k^2-4 t_k-3)=0$, $v_3(3 t_k^2-4 t_k-1)\ge 2$, $v_3(t_k^3+8 t_k^2+5 t_k+4)=1$ and $v_3(4 t_k^3-5t_k^2+8 t_k-1)=2$ which implies that $v_3(a_5(t_k))\ge 0$.

The claim for $a_6(t_k)$ is proved in a similar way.
 
\end{enumerate}
\end{proof}

\end{document}